\newcounter{minutes}\setcounter{minutes}{\time}
\newcounter{hours}\setcounter{hours}{\time}
\date{}
\newfont{\cyrilic}{wncyr10 scaled 1000}
\title[{Inequalities for means of two mean variables}]
{Inequalities for means of two mean variables}
\author[L. Yin]{Li Yin}
\address{Department of Mathematics, Binzhou University, Binzhou City, Shandong Province, 256603, China}
\email{yinli\_79@163.com}
\author[B. A. Bhayo]{Barkat Ali Bhayo}
\address{Education and teaching department, 50101 Mikkeli, Finland}
\email{bhayo.barkat@gmail.com}
\newcommand{\comment}[1]{}
\theoremstyle{plain}
\newtheorem{theorem}[equation]{Theorem}
\newtheorem{lemma}[equation]{Lemma}
\newtheorem{definition}[equation]{Definition}
\newtheorem{corollary}[equation]{Corollary}
\numberwithin{equation}{section}
\begin{document}

\allowdisplaybreaks

\begin{abstract}
Motivated by the work of Anderson, Vamanamurthy and Vuorinen \cite{avv}, 
in this paper authors study the log-convexity and log-concavity of Power mean, Identric mean, weighted Power mean, Lehmer mean, Modified Alzer mean, and establish the relation of these means with each other.
\end{abstract}

\vspace{.5cm}

\maketitle

{\bf 2010 Mathematics Subject Classification}: 26E60, 26A51.

{\bf Keywords and phrases}: inequalities, log-convexity and log-concavity, Logarithmic and Identric mean.

\def\thefootnote{}
\footnotetext{ \texttt{\tiny File:~\jobname .tex,
          printed: \number\year-0\number\month-\number\day,
          \thehours.\ifnum\theminutes<10{0}\fi\theminutes}
} \makeatletter\def\thefootnote{\@arabic\c@footnote}\makeatother


\section{introduction}

For two positive real numbers $a$ and $b$, the Arithmetic, Geometric, Harmonic, Logarithmic, Identric, Lehmer, Modified Alzer, Power mean of order $t\in\mathbb{R}$,  and the weighted Power mean are respectively defined by
$$A(a,b)=\frac{a+b}{2},\quad G(a,b)=\sqrt{ab},$$
$$H(a,b)=\frac{1}{A(1/a,1/b)},$$
$$L(a,b) = \frac{{a - b}}{{\log a - \log b}},a\neq b,L(a,a)=a,$$
$$I(a,b)=
\frac{1}{e}\left( {\frac{{a^a }}{{b^b }}} \right)^{1/(a-b)}
,\quad a\neq b,$$
$$L_p (a,b) = \frac{{a^p  + b^p }}{{a^{p - 1}  + b^{p - 1} }},\quad p > 0,
$$

$$
J_p (a,b) = \frac{{p + 1}}{p}\frac{{a^{p + 1}  - b^{p + 1} }}{{a^p  - b^p }}, \quad p \ne 0,a \ne b,
$$

$$
M_p (a,b) = \left\{ \begin{array}{l}
 \left( {\frac{{a^p  + b^p }}{2}} \right)^{1/p} ,\quad p \ne 0, \\
 \sqrt {ab} , \quad p = 0, \\
 \end{array} \right.
$$
$$
M_p (\omega ,a,b) = \left( {\frac{{a^p  + \omega b^p }}{{1 + \omega }}} \right)^{1/p} , \quad p > 0.
$$
Since last few decades the inequalities involving these means have been studied extensively by numerous authors. For the historical background, generalized, and their connection with elementary functions and with each other, we refer the reader to \cite{alzer1, alzer2,newmean,carlson,mit,ns0206,ns1004a,ns1004}.
 
For the following definition see \cite{avv}.
\begin{definition}
Let $f:I_0\to (0,\infty)$ be continuous, where $I$ is a sub-interval of $(0,\infty)$. Let $M$ and $N$ be two any mean functions. We say that the function
$f$ is $MN$-convex (concave) if
$$f (M(x, y)) \leq (\geq)
N(f (x), f (y)) \,\, \text{ for \,\, all} \,\, x,y \in I_0\,.$$
\end{definition}
In \cite{avv}, Anderson, Vamanamurthy and Vuorinen studied the convexity and concavity properties of a function $f$ with respect to two mean values, and gave the following result:
\begin{lemma}\cite[Theorem 2.4]{avv} Let 
$f : I_0\to (0,\infty)$ be a differentiable.
In parts (4)–(9), let $I_0 = (0, b),\, 0<b<\infty$. Then
\begin{enumerate}
\item $f$ is $AA$-convex (concave) if and only if $f'(x)$ is increasing (decreasing),
\item $f$ is $AG$-convex (concave) if and only if $f'(x)/f (x)$ is increasing (decreasing),
\item $f$ is $AH$-convex (concave) if and only if $f'(x)/f (x)^2$ is increasing (decreasing),
\item $f$ is $GA$-convex (concave) if and only if $xf'(x)$ is increasing (decreasing),
\item $f$ is $GG$-convex (concave) if and only if $xf'(x)/f (x)$ is increasing (decreasing),
\item $f$ is $GH$-convex (concave) if and only if $xf'(x)/f (x)^2$ is increasing (decreasing),
\item $f$ is $HA$-convex (concave) if and only if $x^2f'(x)$ is increasing (decreasing),
\item $f$ is $HG$-convex (concave) if and only if $x^2f'(x)/f (x)$ is increasing (decreasing),
\item $f$ is $HH$-convex (concave) if and only if $x^2f'(x)/f (x)^2$ is increasing 
(decreasing).
\end{enumerate}
\end{lemma}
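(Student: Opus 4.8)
The plan is to reduce all nine equivalences to the single classical fact that a differentiable function $g$ on an interval is convex if and only if $g'$ is increasing (and concave if and only if $g'$ is decreasing). The bridge is the observation that each of $A$, $G$, $H$ is a quasi-arithmetic mean. Setting
$$\sigma_A(u)=u,\quad \sigma_G(u)=e^u,\quad \sigma_H(u)=1/u,$$
one checks the identity $M(\sigma_M(u),\sigma_M(v))=\sigma_M(A(u,v))$ for $M\in\{A,G,H\}$, so that $M$ becomes the ordinary arithmetic mean after the change of variable $x=\sigma_M(u)$. Likewise, with the generators $\rho_A(t)=t$, $\rho_G(t)=\log t$, $\rho_H(t)=1/t$ one has $N(p,q)=\rho_N^{-1}\bigl(A(\rho_N(p),\rho_N(q))\bigr)$ for $N\in\{A,G,H\}$.

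First I would fix an input mean $M$ and an output mean $N$, write $\sigma=\sigma_M$, $\rho=\rho_N$, and introduce the auxiliary function $g=\rho\circ f\circ\sigma$. Substituting $x=\sigma(u)$, $y=\sigma(v)$ turns the defining inequality $f(M(x,y))\le(\ge)N(f(x),f(y))$ into $\rho^{-1}\bigl(g(A(u,v))\bigr)\le(\ge)\rho^{-1}\bigl(A(g(u),g(v))\bigr)$. Using that $\rho$ is monotone collapses this to the statement that $g$ is midpoint convex or midpoint concave in $u$; since $f$ is differentiable and $\sigma,\rho$ are smooth on the relevant range, $g$ is continuous, so midpoint convexity upgrades to genuine convexity. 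Here the interval hypotheses enter: they guarantee that each $\sigma_M$ is a diffeomorphism of $\sigma_M^{-1}(I_0)$ onto $I_0$ and that $M$ maps $I_0\times I_0$ into $I_0$, so that $g$ is defined on a genuine interval and the convexity criterion applies. Thus \emph{$f$ is $MN$-convex/concave} is equivalent to convexity or concavity of $g$, hence to monotonicity of $g'$ in $u$.

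The next step is to unwind $g'$. By the chain rule $g'(u)=\rho'(f(x))\,f'(x)\,\sigma'(u)$ with $x=\sigma(u)$, and inserting the explicit derivatives $\sigma'\in\{1,\,x,\,-x^2\}$ and $\rho'(f)\in\{1,\,1/f,\,-1/f^2\}$ produces precisely the quantities $f'$, $f'/f$, $f'/f^2$, $xf'$, $xf'/f$, $xf'/f^2$, $x^2f'$, $x^2f'/f$, $x^2f'/f^2$ appearing in parts (1)–(9), up to sign.

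The one point demanding care — and the only real obstacle — is the bookkeeping of signs. When $M=H$ the substitution $\sigma_H(u)=1/u$ is \emph{decreasing}, so monotonicity of $g'$ in $u$ corresponds to the \emph{opposite} monotonicity in $x$; and when $N=H$ the generator $\rho_H$ is decreasing, so convexity of $g$ corresponds to $MN$-\emph{concavity} of $f$. I would verify case by case that these two sign reversals combine with the sign of $\sigma'$ (negative for $H$) and of $\rho'$ (negative for $H$) so that every ``$\le$'' in the definition matches exactly the ``increasing'' alternative in the statement. For instance, in the $HH$-case both reversals occur and the negative factors $-x^2$ and $-1/f^2$ multiply to give $g'=x^2f'/f^2$, whose being increasing is equivalent to $HH$-convexity. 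Checking that this alignment holds uniformly across all nine entries is the heart of the argument; everything else is the two standard facts quoted above.
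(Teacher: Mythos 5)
Your proposal is correct, but note that the paper itself offers no proof of this statement: it is quoted verbatim as \cite[Theorem 2.4]{avv}, so there is nothing internal to compare against. Your reduction --- representing $A$, $G$, $H$ as quasi-arithmetic means with generators $u$, $e^u$, $1/u$ on the input side and $t$, $\log t$, $1/t$ on the output side, passing to $g=\rho\circ f\circ\sigma$, and invoking the equivalence of convexity with monotonicity of the derivative --- is essentially the argument used in the original source, and your sign bookkeeping is right: the two reversals in the $H$ cases (the decreasing generator $\rho_H$ turning convexity of $g$ into $MN$-concavity of $f$, and the decreasing substitution $x=1/u$ flipping the monotonicity in $u$ versus in $x$) combine with the negative factors $-x^2$ and $-1/f^2$ exactly so that ``increasing in $x$'' always pairs with ``convex,'' as your $HH$ computation illustrates. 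The only detail worth writing out explicitly in a final version is that $g$ is not merely continuous but differentiable (as a composition of differentiable maps), since the last step uses the characterization of convexity of a \emph{differentiable} function by monotonicity of $g'$; with that said, the remaining eight cases are the routine verifications you describe.
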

After the publication \cite{avv}, many authors have studied generalized convexity. For a partial survey of the recent results, see  \cite{avz}. In \cite{by1}, the following results appeared. 
Motivated by the above work of Anderson et al. \cite{avv}, the authors of this paper studied the convexity and concavity properties of a function with respect to Logarithmic, Identric mean and gave the following results..

\begin{lemma}\label{lem2.1}\cite{by1}
Let $f:(0,1)\to(0,\infty)$ be a continuous, then 
\begin{enumerate}
\item $f$ is $LL$-convex (concave) if $f$ is increasing and $\log$-convex (concave),
\item $f$ is $AL$-convex (concave) if $f$ is increasing and $\log$-convex (concave).
\end{enumerate}
\end{lemma}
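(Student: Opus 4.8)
The plan is to reduce both statements to the characterizations of $MN$-convexity already established, exploiting the definitions $L(a,b)=(a-b)/(\log a-\log b)$ and $A(a,b)=(a+b)/2$. First I would fix $x,y\in(0,1)$ and recall what the two claimed implications require: for $LL$-convexity I must show $f(L(x,y))\le L(f(x),f(y))$, and for $AL$-convexity I must show $f(A(x,y))\le L(f(x),f(y))$, under the hypotheses that $f$ is increasing and $\log$-convex. The natural device is to pass to logarithms: set $g=\log f$, so that $g$ is convex, and note that $L(u,v)$ sits between $G(u,v)$ and $A(u,v)$ by the classical mean inequalities $G\le L\le A$.

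For the $AL$-case I would argue as follows. Since $f$ is $\log$-convex, $\log f$ is convex, and convexity of $\log f$ gives $f(A(x,y))\le G(f(x),f(y))$; this is exactly the $AG$-convexity recorded in part (2) of the Anderson--Vamanamurthy--Vuorinen lemma once one notes that $(\log f)'=f'/f$ increasing is equivalent to $\log f$ convex. Then the elementary chain $G(f(x),f(y))\le L(f(x),f(y))$ upgrades this to $f(A(x,y))\le L(f(x),f(y))$, which is precisely $AL$-convexity. The monotonicity hypothesis is used to guarantee $f(x),f(y)>0$ lie in the range where $L$ is defined and to handle the degenerate case $x=y$. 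The concave version is the mirror image, replacing $\log$-convex by $\log$-concave and reversing every inequality, now using $L\le A$ in the opposite direction together with $A(f(x),f(y))\le$ the concave bound.

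For the $LL$-case the extra ingredient is monotonicity of $f$. Here I would compare $f(L(x,y))$ with $L(f(x),f(y))$ by inserting the arithmetic mean as an intermediary: since $L(x,y)\le A(x,y)$ and $f$ is increasing, $f(L(x,y))\le f(A(x,y))$, and then the $AL$-convexity just proved gives $f(A(x,y))\le L(f(x),f(y))$. Chaining these two inequalities yields $f(L(x,y))\le L(f(x),f(y))$, which is $LL$-convexity. Thus part (1) follows formally from part (2) together with the monotonicity of $f$ and the mean inequality $L\le A$; the concave statement again dualizes, using $L\ge G$, $f$ increasing, and the $AL$-concave inequality.

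The main obstacle I anticipate is not any single estimate but the careful bookkeeping of which classical mean inequality is applied in which direction, and verifying that the hypotheses ``increasing'' and ``$\log$-convex'' suffice to license each step without secretly needing more (for instance, one must check that combining $L\le A$ with monotonicity does not require $f$ to be convex in the ordinary sense, only $\log$-convex). I would therefore write the argument so that the only facts invoked are $G\le L\le A$, the equivalence of $\log$-convexity with $AG$-convexity from the cited lemma, and the monotonicity of $f$; handling the equality case $x=y$ and confirming the endpoints stay inside $(0,\infty)$ are the remaining routine details.
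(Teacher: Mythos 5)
The paper itself gives no proof of this lemma --- it is imported verbatim from \cite{by1} --- so your argument can only be judged on its own terms. The convex half of your proposal is correct and is the standard route: $\log$-convexity gives $f(A(x,y))\le G(f(x),f(y))$, the classical inequality $G\le L$ upgrades this to $AL$-convexity, and then $L\le A$ together with monotonicity of $f$ yields $f(L(x,y))\le f(A(x,y))\le L(f(x),f(y))$, i.e.\ $LL$-convexity. (As you note, the ``increasing'' hypothesis is genuinely used only in the $LL$ step.)

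The concave half, however, is a real gap, and it is not a matter of bookkeeping. ``Reversing every inequality'' gives you $f(A(x,y))\ge G(f(x),f(y))$ from $\log$-concavity, but the second link of your chain now points the wrong way: since $G\le L$, the inequality $f(A(x,y))\ge G(f(x),f(y))$ cannot be upgraded to $f(A(x,y))\ge L(f(x),f(y))$. The true dual of your convex chain would be $f(A(x,y))\ge A(f(x),f(y))\ge L(f(x),f(y))$, and the first of these requires ordinary concavity of $f$, which $\log$-concavity does not supply. The obstruction is intrinsic rather than a defect of your particular chain: take $f(x)=e^{x}$ on $(0,1)$, which is increasing and (weakly) $\log$-concave; then $f(A(x,y))=e^{(x+y)/2}=G(f(x),f(y))<L(f(x),f(y))$ for $x\ne y$, and a fortiori $f(L(x,y))<L(f(x),f(y))$, so both concave assertions fail for this $f$. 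Hence no rearrangement of $G\le L\le A$ can close the concave case under the stated hypotheses; if you want to prove the parenthetical statements you must either strengthen the hypothesis (e.g.\ to concavity of $f$, or reverse the monotonicity) or flag that the concave half of the quoted lemma does not follow --- and, as stated, appears not to hold.
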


\begin{lemma}\label{lem2.2}\cite[Theorem 1]{by2}
Let $f:I \rightarrow (0,\infty)$ and $I\subseteq(0,\infty).$
Then the following inequality holds true:
$$  I(f(x),f(y))\geq f(I(x,y))$$
$$( I(f(x),f(y))\leq f(A(x,y)))$$
If the function $f(x)$ is a continuously differentiable, increasing and log-convex
(concave).
\end{lemma}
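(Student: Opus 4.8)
The plan is to pass to logarithms and exploit the integral representation
$$\log I(a,b)=\frac{1}{b-a}\int_a^b \log t\,dt,\qquad a\neq b,$$
which one verifies from $\int_a^b\log t\,dt=b\log b-a\log a-(b-a)$. Since $\log$ is increasing and every quantity in sight is positive, each of the two asserted inequalities is equivalent to its logarithmic form. Assume without loss of generality that $x<y$; because $f$ is increasing we then have $f(x)\le f(y)$, the case $f(x)=f(y)$ being trivial, so suppose $f(x)<f(y)$.

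First I would rewrite the left-hand side as a weighted average. Substituting $s=f(t)$ in the integral representation gives
$$\log I\big(f(x),f(y)\big)=\frac{1}{f(y)-f(x)}\int_x^y \log f(t)\,f'(t)\,dt=\frac{\int_x^y g(t)\,f'(t)\,dt}{\int_x^y f'(t)\,dt},$$
where $g:=\log f$ and the weight is $w(t):=f'(t)\ge 0$, using $f(y)-f(x)=\int_x^y f'(t)\,dt$. This exhibits $\log I(f(x),f(y))$ as the $w$-average of $g$ over $[x,y]$.

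Next I would apply Jensen's inequality to this average. In the log-convex case $g$ is convex, so the $w$-average of $g$ is at least $g(\xi)$, where $\xi:=\big(\int_x^y t\,f'(t)\,dt\big)/\big(\int_x^y f'(t)\,dt\big)$ is the $w$-centroid of $[x,y]$; in the log-concave case $g$ is concave and the bound reverses to $\le g(\xi)$. Since $g$ is increasing (as $f$ is increasing and positive), everything then reduces to comparing the centroid $\xi$ with the inner mean. For the first inequality, log-convexity together with positivity makes $f$ itself convex, hence the weight $f'$ is increasing; by Chebyshev's integral inequality an increasing weight pushes the centroid to the right, giving $\xi\ge A(x,y)$. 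Combining this with the classical ordering $I(x,y)\le A(x,y)$ and the monotonicity of $g$ yields $g(\xi)\ge g(I(x,y))=\log f(I(x,y))$, which is the claim.

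The main obstacle is the centroid comparison in the concave case. There one needs $\xi\le A(x,y)$, i.e. the weight $f'$ should be \emph{decreasing}, which is precisely the statement that $f$ is concave; log-concavity controls $g=\log f$ for the Jensen step but does not by itself force the monotonicity of $f'$. I would therefore expect the delicate point to be isolating exactly the hypothesis on $f$ that moves the $w$-centroid to the left of $A(x,y)$. Once $\xi\le A(x,y)$ is established, the same Jensen-plus-monotonicity argument gives
$$\log I\big(f(x),f(y)\big)\le g(\xi)\le g\big(A(x,y)\big)=\log f\big(A(x,y)\big),$$
completing the second inequality.
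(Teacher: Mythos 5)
The paper never proves this lemma---it is imported verbatim from \cite{by2}---so there is no internal proof to compare against and your argument must stand on its own. Your treatment of the first inequality is correct and complete: the representation $\log I(a,b)=\frac{1}{b-a}\int_a^b\log t\,dt$ and the substitution $s=f(t)$ do exhibit $\log I(f(x),f(y))$ as the $f'$-weighted average of $\log f$ over $[x,y]$; Jensen for the convex function $\log f$ bounds this below by $\log f(\xi)$; log-convexity of a positive function does imply convexity, so $f'$ is nondecreasing and Chebyshev's integral inequality gives $\xi\ge A(x,y)\ge I(x,y)$; monotonicity of $\log f$ then finishes. Every step checks out.

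The second inequality, however, you leave unproved, and the gap you flag is real and cannot be closed: the parenthetical statement is false under the stated hypothesis. Take $f(t)=e^t$, which is continuously differentiable, increasing, and log-affine, hence log-concave, with $x=1$, $y=2$ (or any $x<y$): one computes $I(e^x,e^y)=e^{\xi}$ with $\xi=\bigl(\int_x^y te^t\,dt\bigr)/\bigl(\int_x^y e^t\,dt\bigr)>\frac{x+y}{2}$, so $I(f(x),f(y))>f(A(x,y))$; concretely $I(1,e)=e^{1/(e-1)}\approx 1.79$ exceeds $e^{1/2}\approx 1.65$, and a small strictly log-concave perturbation of $e^t$ gives a strict counterexample too. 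Your diagnosis that the centroid comparison $\xi\le A(x,y)$ requires $f'$ to be decreasing---that is, $f$ genuinely concave rather than log-concave---identifies exactly the missing hypothesis. Under concavity of $f$ the integral machinery is unnecessary: $I(f(x),f(y))\le A(f(x),f(y))\le f(A(x,y))$ follows from the classical bound $I\le A$ together with midpoint concavity. So the defect lies in the cited statement rather than in your method, but as a proof of the lemma as written, the proposal is incomplete and necessarily so.
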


In this paper authors make a contribution to the topic by giving the following theorems. 
\begin{theorem}\label{theorem1}
For all $a,b\in(0,\infty)$ fixed, if $p,q>0$ or $p,q\leq0$ and $b\geq a$, then the following hold true:\\
(1)~~$L\left( {M_p (a,b),M_q (a,b)} \right) \le M_{L(p,q)} (a,b);$\\
(2)~~$L\left( {M_p (a,b),M_q (a,b)} \right) \le M_{A(p,q)} (a,b);$\\
(3)~~$I\left( {M_p (a,b),M_q (a,b)} \right) \le M_{A(p,q)} (a,b).$\\
\end{theorem}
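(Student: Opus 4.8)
The plan is to observe that all three inequalities are really statements about the single function $g(p) := M_p(a,b)$ of the order $p$, with $a,b$ fixed. Writing $g(p)=M_p(a,b)$, part (1) reads $L(g(p),g(q))\le g(L(p,q))$, which by the Definition above is exactly the assertion that $g$ is $LL$-concave; part (2) reads $L(g(p),g(q))\le g(A(p,q))$, i.e. $g$ is $AL$-concave; and part (3) reads $I(g(p),g(q))\le g(A(p,q))$, which is precisely the log-concave branch of Lemma~\ref{lem2.2}. Hence, by Lemma~\ref{lem2.1}(1)--(2) and Lemma~\ref{lem2.2} (the proof of Lemma~\ref{lem2.1} extends verbatim from $(0,1)$ to any subinterval of $(0,\infty)$), the whole theorem follows once I show that, for fixed $a,b$, the map $p\mapsto M_p(a,b)$ is increasing and $\log$-concave.

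Monotonicity of $p\mapsto M_p(a,b)$ is the classical power-mean inequality; analytically it follows because $\log M_p = \phi(p)/p$ with $\phi(p):=\log\frac{a^p+b^p}{2}$ convex (it is the cumulant generating function of the two-point measure $\tfrac12(\delta_{\log a}+\delta_{\log b})$) and $\phi(0)=0$, so the secant slope $\phi(p)/p$ is nondecreasing.

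For $\log$-concavity I would first normalise: by homogeneity $\log M_p(a,b)=\log a+\log M_p(1,b/a)$, and a linear rescaling $p\mapsto p/\log(b/a)$ reduces the claim to concavity of the one-variable function
$$\psi(r)=\frac{1}{r}\log\frac{1+e^{r}}{2},\qquad r>0.$$
Writing $\psi=N/r$ with $N(r)=\log\frac{1+e^{r}}{2}$, a direct computation gives $\psi''(r)=Q(r)/r^{3}$, where $Q(r)=N''(r)r^{2}-2N'(r)r+2N(r)$ satisfies $Q(0)=0$ and $Q'(r)=N'''(r)\,r^{2}$. Since $N'''(r)=\dfrac{e^{r}(1-e^{r})}{(1+e^{r})^{3}}<0$ for $r>0$, we get $Q'<0$, hence $Q<0$ and $\psi''<0$ on $(0,\infty)$; this establishes $\log$-concavity of $p\mapsto M_p(a,b)$ for $p>0$ and completes the case $p,q>0$.

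The main obstacle is the remaining range $p,q\le 0$ with $b\ge a$, because the very same computation shows $p\mapsto M_p$ is $\log$-\emph{convex} on $(-\infty,0)$ (there $1-e^{r}>0$, so $\psi''>0$), and consequently Lemmas~\ref{lem2.1}--\ref{lem2.2} cannot be applied directly in this range. Here I would exploit the duality $M_{-p}(a,b)\,M_{p}(a,b)=G(a,b)^{2}$ together with the homogeneity relations $L(\lambda x,\lambda y)=\lambda L(x,y)$, $L(1/x,1/y)=L(x,y)/(xy)$ and the corresponding identities for $I$, so as to transport each inequality from the positive range back to the negative one; the hypothesis $b\ge a$ is what pins down the signs in this reflection. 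Verifying that the reflected inequalities genuinely retain the stated direction is the delicate point, and I expect this sign bookkeeping — rather than any single estimate — to be where the real care is required.
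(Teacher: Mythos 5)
For $p,q>0$ your argument is complete and is structurally the same as the paper's: both reduce the theorem to the single claim that $g(p)=M_p(a,b)$ is increasing and log-concave in $p$, and then read off parts (1)--(2) from Lemma~\ref{lem2.1} and part (3) from Lemma~\ref{lem2.2}. The only real difference is how the log-concavity is obtained: the paper deduces it from Mildorf's concavity result (Lemma~\ref{lem2.3}, valid for $p\ge 1$) via the scaling identity $M_{pt}(1,\lambda)^t=M_p(1,\lambda^t)$, whereas you give a direct computation ($\psi''=Q/r^3$ with $Q(0)=0$ and $Q'=N'''r^2<0$), which is correct and more self-contained. Your aside that Lemma~\ref{lem2.1} must be transported from $(0,1)$ to a general subinterval of $(0,\infty)$ is a legitimate point the paper passes over silently.

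The case $p,q\le 0$ is a genuine gap in your write-up --- you only sketch the duality $M_{-p}(a,b)M_p(a,b)=G(a,b)^2$ and defer the ``sign bookkeeping'' --- but the obstruction you identified is real and cannot be overcome. Your observation that $p\mapsto\log M_p(a,b)$ is \emph{convex} on $(-\infty,0)$ is correct (it follows at once from $\log M_{-p}=2\log G-\log M_p$ together with concavity on $(0,\infty)$), and it contradicts Lemma~\ref{thm2.1}(2) of the paper, whose unsupported assertion that $f''(p)<0$ for $p<0$, $\lambda>1$ is false. Carrying your reflection through for part (2) with $s=-p$, $t=-q>0$ turns the claimed inequality into $L(M_s,M_t)\,M_{A(s,t)}\le M_sM_t$, while $L\ge G$ and log-concavity of $M$ give the \emph{opposite} inequality, strictly so when $a\ne b$ and $p\ne q$. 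A concrete counterexample: for $a=1$, $b=4$, $p=-1$, $q=-3$ one computes $M_{-1}=1.6$, $M_{-3}\approx 1.2535$, $M_{-2}\approx 1.3720$, and $L(M_{-1},M_{-3})\approx 1.4196>M_{-2}$, so Theorem~\ref{theorem1}(2) fails for $p,q<0$; part (1) is not even well posed there, since $L(p,q)$ involves $\log p$. In short, your proof is complete and correct exactly on the range where the theorem is true ($p,q>0$), and the missing case is missing because the statement, and the paper's own lemma supporting it, are wrong on the negative axis.
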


\begin{theorem}\label{theorem2}
For all $a,b\in(0,\infty)$ fixed and $b\geq a$, if $p,q>0$, then the following hold true:\\
(1)~~$L\left( {L_p (a,b),L_q (a,b)} \right) \geq L_{L(p,q)} (a,b);$\\
(2)~~$L\left( {L_p (a,b),L_q (a,b)} \right) \geq L_{A(p,q)} (a,b);$\\
(3)~~$I\left( {L_p (a,b),L_q (a,b)} \right) \geq L_{I(p,q)} (a,b).$\\
\end{theorem}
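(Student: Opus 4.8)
The plan is to read each of the three inequalities as a statement in the $MN$-convexity framework applied to the single-variable function $g(p):=L_p(a,b)$, with $a,b$ (hence $t:=b/a\ge 1$) held fixed, and then to invoke Lemmas~\ref{lem2.1} and \ref{lem2.2}. Concretely, inequality (1) asserts $g(L(p,q))\le L(g(p),g(q))$, which is exactly the statement that $g$ is $LL$-convex; inequality (2) asserts $g(A(p,q))\le L(g(p),g(q))$, i.e.\ that $g$ is $AL$-convex; and inequality (3) asserts $g(I(p,q))\le I(g(p),g(q))$, i.e.\ that $g$ is $II$-convex. By Lemma~\ref{lem2.1}(1)--(2) and Lemma~\ref{lem2.2}, applied on the subinterval of $(0,\infty)$ determined by $p,q$, all three conclusions follow simultaneously once we show that $g$ is continuously differentiable, increasing and log-convex as a function of $p$.

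So the proof reduces to two analytic claims about $p\mapsto L_p(a,b)$. For both it is convenient to normalize: writing $t=b/a\ge 1$ gives
$$\log L_p(a,b)=\log a+h(p)-h(p-1),\qquad h(s):=\log\bigl(1+t^{\,s}\bigr),$$
so that all the $p$-dependence sits in the shifted difference $h(p)-h(p-1)$. Differentiating, $(\log L_p)'=h'(p)-h'(p-1)$ with $h'(s)=(\log t)\,t^{\,s}/(1+t^{\,s})$. Since $u\mapsto u/(1+u)$ is increasing and $t^{\,p}>t^{\,p-1}$ for $t>1$, the bracket is positive and $(\log L_p)'>0$; this settles monotonicity (the case $a=b$, i.e.\ $t=1$, being trivial since then $L_p\equiv a$). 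Thus the first claim is routine.

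The log-convexity is where the real work lies, and it is the step I expect to be the main obstacle. Here one computes
$$(\log L_p)''=h''(p)-h''(p-1),\qquad h''(s)=(\log t)^2\,\frac{t^{\,s}}{(1+t^{\,s})^2},$$
so that log-convexity is equivalent to $\psi(t^{\,p})\ge\psi(t^{\,p-1})$, where $\psi(u):=u/(1+u)^2$. The profile $\psi$ is \emph{not} monotone---it increases on $(0,1)$, decreases on $(1,\infty)$, and satisfies $\psi(u)=\psi(1/u)$---so the sign of $h''(p)-h''(p-1)$ is genuinely delicate, and this is precisely the place where the hypotheses $p,q>0$ and $b\ge a$ must be made to carry their weight. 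I would therefore devote the bulk of the effort to a careful comparison of $\psi(t^{\,p})$ with $\psi(t^{\,p-1})$ (equivalently, to locating $|p|$ versus $|p-1|$ relative to the unimodal peak of $\psi$), and I would want to verify the admissible range of $p$ against a few numerical values before asserting the inequality in full generality. Once the required sign of $(\log L_p)''$ is in hand, parts (1)--(3) drop out immediately from the cited lemmas with no further computation.
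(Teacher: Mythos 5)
Your reduction is exactly the paper's: its Lemma~\ref{thm2.2} asserts precisely the monotonicity and log-convexity of $p\mapsto L_p(a,b)$ that you need, and the proof of Theorem~\ref{theorem2} is then the application of Lemmas~\ref{lem2.1} and~\ref{lem2.2} that you describe. The monotonicity part of your argument is fine. But the step you flag as the main obstacle is a genuine gap, and your suspicion about it is well founded: the log-convexity is false, so the proposal cannot be completed along these lines. With $\lambda=b/a>1$ and $g(p)=\log L_p(1,\lambda)=\log(1+\lambda^p)-\log(1+\lambda^{p-1})$ one gets
$$g''(p)=(\ln\lambda)^2\bigl[\psi(\lambda^{p})-\psi(\lambda^{p-1})\bigr],\qquad \psi(u)=\frac{u}{(1+u)^2},$$
and since $\psi(u)=\psi(1/u)$ gives $\psi(\lambda^{s})=\psi(\lambda^{|s|})$ with $\psi(\lambda^{u})$ strictly decreasing in $u\ge 0$, we have $g''(p)\ge 0$ exactly when $|p|\le|p-1|$, i.e.\ for $p\le 1/2$. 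So $p\mapsto L_p(a,b)$ is log-concave, not log-convex, on $[1/2,\infty)$, and Lemmas~\ref{lem2.1} and~\ref{lem2.2} cannot be invoked there.

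The paper does not notice this: its proof of Lemma~\ref{thm2.2} reports $g'(p)=\lambda^p\ln\lambda/(1+\lambda^p)$, i.e.\ it differentiates only the numerator of $(1+\lambda^p)/(1+\lambda^{p-1})$ and drops the $p$-dependence of the denominator; restoring the missing term produces the sign change above. Worse, the theorem itself fails: for $a=1$, $b=e$, $p=1$, $q=4$ one has $L_1=(1+e)/2\approx1.8591$ and $L_4=(1+e^4)/(1+e^3)\approx2.6368$, hence $L(L_1,L_4)\approx2.2255$, while $L_{A(1,4)}=L_{2.5}=(1+e^{2.5})/(1+e^{1.5})\approx2.4048$ and $L_{L(1,4)}=L_{3/\ln 4}\approx2.3095$; both (1) and (2) are violated, and (3) fails similarly since $I(L_1,L_4)\approx2.237<2.361\approx L_{I(1,4)}$. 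So your plan reproduces the paper's strategy, the point at which you stopped is exactly where the paper's argument goes wrong, and no amount of care in comparing $\psi(\lambda^p)$ with $\psi(\lambda^{p-1})$ will rescue the statement for general $p,q>0$.
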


\begin{theorem}\label{theorem3}
For $a\geq b\geq 1$, $p,\omega,\nu>0$, the following hold true:\\
(1)~~$L\left( {M_p (\omega ,a,b),M_p (\nu,a,b)} \right) \le M_p (L(\omega ,\nu),a,b),$\\
(2)~~$L\left( {M_p (\omega ,a,b),M_p (\nu,a,b)} \right) \le M_p (A(\omega ,\nu),a,b),$\\
(3)~~$I\left( {M_p (\omega ,a,b),M_p (\nu,a,b)} \right) \le M_p (A(\omega ,\nu),a,b).$\\
\end{theorem}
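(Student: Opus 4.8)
The plan is to read all three inequalities as statements about the single-variable function $g(\omega):=M_p(\omega,a,b)$, with $p,a,b$ held fixed, and then to invoke Lemmas~\ref{lem2.1} and~\ref{lem2.2}. Unwinding the definition of $MN$-concavity, part~(1) asserts precisely that $g$ is $LL$-concave, part~(2) that $g$ is $AL$-concave, and part~(3) is exactly the $\log$-concave branch $I(g(\omega),g(\nu))\le g(A(\omega,\nu))$ of Lemma~\ref{lem2.2}. Hence, by Lemma~\ref{lem2.1}(1)--(2) together with Lemma~\ref{lem2.2}, all three conclusions follow simultaneously as soon as I can show that
$$\omega\longmapsto g(\omega)=\left(\frac{a^p+\omega b^p}{1+\omega}\right)^{1/p}$$
is monotone in the direction the lemmas demand and is $\log$-concave on its domain. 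So the whole theorem reduces to these two one-variable facts.

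For the monotonicity I would set $u=a^p$, $v=b^p$ and $h(\omega)=(u+\omega v)/(1+\omega)$, so that $g=h^{1/p}$ with $p>0$; differentiating gives $h'(\omega)=(v-u)/(1+\omega)^2$, a quantity of constant sign determined by comparing $a^p$ with $b^p$, and since $g$ is an increasing power of $h$ it inherits the same monotonicity. For the $\log$-concavity I would compute
$$\bigl(\log g\bigr)'(\omega)=\frac1p\left(\frac{v}{u+\omega v}-\frac{1}{1+\omega}\right)=\frac1p\cdot\frac{v-u}{(u+\omega v)(1+\omega)},$$
and then differentiate once more. Writing $D(\omega)=(u+\omega v)(1+\omega)$, one has $D'(\omega)=u+v+2v\omega>0$, so $(\log g)''=-(v-u)D'/(pD^2)$, whose sign is the sign of $-(v-u)$. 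Thus both the required monotonicity and the sign of $(\log g)''$ are governed by the single quantity $v-u=b^p-a^p$.

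With those two facts in hand the three assertions are immediate: Lemma~\ref{lem2.1}(1) converts monotonicity plus $\log$-concavity into $LL$-concavity, yielding~(1); Lemma~\ref{lem2.1}(2) gives the $AL$-concavity statement~(2); and the $\log$-concave branch of Lemma~\ref{lem2.2} gives~(3). I expect the main obstacle to be the consistent bookkeeping of the sign of $b^p-a^p$: the same factor $v-u$ dictates both whether $g$ increases or decreases and whether $\log g$ is convex or concave, while the quoted lemmas deliver the stated inequalities only in the \emph{increasing} and \emph{$\log$-concave} regime. One must therefore verify with care that the ordering of $a$ and $b$ assumed in the statement places the weighted mean in precisely that regime; once the second-derivative sign is pinned down correctly, the remainder is a routine appeal to Lemmas~\ref{lem2.1} and~\ref{lem2.2}.
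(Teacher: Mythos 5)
Your reduction is exactly the paper's: establish monotonicity and a log-convexity/log-concavity property of $g(\omega)=M_p(\omega,a,b)$ (this is the content of the paper's Lemma~\ref{thm2.4}) and then feed it into Lemmas~\ref{lem2.1} and~\ref{lem2.2}. Your derivative computations are correct. The gap is that you stop at ``one must verify that the ordering of $a$ and $b$ places $g$ in the increasing, log-concave regime'' --- and that verification \emph{fails}. Under the theorem's hypothesis $a\ge b\ge 1$ and $p>0$ you have $v-u=b^p-a^p\le 0$, so by your own formulas $g'\le 0$ and $(\log g)''=-(v-u)D'/(pD^2)\ge 0$: the function is \emph{decreasing and log-convex}, the exact opposite of what Lemmas~\ref{lem2.1} and~\ref{lem2.2} require (both demand $f$ increasing, and the stated direction of the inequalities needs log-concavity). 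The appeal to those lemmas therefore cannot be made, and the proof does not close.

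The discrepancy with the paper traces to a differentiation slip in Lemma~\ref{thm2.4}: writing $\mu=(b/a)^p$, one has $\frac{d}{d\omega}\log(1+\omega\mu)=\mu/(1+\omega\mu)$, not $1/(1+\omega\mu)$, so the correct derivative is $k'(\omega)=\frac{1}{p}\cdot\frac{\mu-1}{(1+\omega\mu)(1+\omega)}$ and the lemma's two cases are interchanged: $g$ is increasing and log-concave precisely when $b\ge a$, and decreasing and log-convex when $a\ge b$. Consequently Theorem~\ref{theorem3} as stated is false: log-convexity of $g$ gives $g(A(\omega,\nu))\le G(g(\omega),g(\nu))\le L(g(\omega),g(\nu))$, reversing~(2); concretely, $p=1$, $a=2$, $b=1$, $\omega=1$, $\nu=3$ gives $L(3/2,\,5/4)\approx 1.371>4/3=M_1(2,2,1)$. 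Your computation, pushed one line further, is exactly what detects this. The intended argument does go through under the reversed hypothesis $b\ge a$ (modulo the mismatch between the domain $(0,1)$ in Lemma~\ref{lem2.1} and the domain $(0,\infty)$ needed here), so the fix is to correct the sign analysis and restate the hypothesis, not to find a different method.
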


\section{Lemmas and proof}
In this section we give few lemmas which will be used in the proof of main result.

\begin{lemma}[{\cite[Lemma~1]{mil}}]\label{lem2.3}
The two variable power mean $M_p(a,b)$ is concave in $p$ for $p\geq1$ and convex in $p$ for $p\leq-1$.
That is, $\frac{{\partial ^2 }}{{\partial p^2 }}\left[ {M_p (a,b)} \right] \le 0$ for $p\geq1$ and $\frac{{\partial ^2 }}{{\partial p^2 }}\left[ {M_p (a,b)} \right] \geq 0
$~~~ for $p\leq-1$ with equality if and only if $a=b$.
\end{lemma}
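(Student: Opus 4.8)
The plan is to reduce the statement to a one‑dimensional monotonicity problem and to exploit the reciprocal symmetry $p\mapsto -p$ of the power mean. Since $M_p(\lambda a,\lambda b)=\lambda M_p(a,b)$ and the scaling factor $\lambda$ is independent of $p$, we may normalize so that $ab=1$; writing $a=e^{-x}$, $b=e^{x}$ with $x=\tfrac12\log(b/a)\ge 0$ gives the convenient closed form
$$\Psi(p):=M_p(a,b)=\bigl(\cosh(px)\bigr)^{1/p}.$$
The case $x=0$ (that is, $a=b$) makes $\Psi$ constant, so $\partial_p^2 M_p\equiv 0$ there, which will yield the equality clause; the task is then to prove strict concavity for $p\ge 1$ and strict convexity for $p\le -1$ when $x>0$.

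Next I would dispose of the range $p\le -1$ using the identity $\Psi(-p)=1/\Psi(p)$. If $\Psi$ is positive and concave on $[1,\infty)$, then for $q\le -1$, using $\Psi(q)=\Psi(-q)^{-1}$, one computes
$$\frac{d^2}{dq^2}\,\Psi(-q)^{-1}=2\,\Psi(-q)^{-3}\Psi'(-q)^2-\Psi(-q)^{-2}\Psi''(-q)\ge 0,$$
because $\Psi''\le 0$ and $\Psi>0$; strict concavity forces strict convexity. Thus everything reduces to proving that $\Psi$ is strictly concave for $p\ge 1$.

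For the core step I would pass to the logarithm: with $F(p)=\log\Psi(p)=p^{-1}\log\cosh(px)$ one has $\Psi''=\Psi\bigl[(F')^2+F''\bigr]$, and since $\Psi>0$ it suffices to show $(F')^2+F''\le 0$. Carrying out the two differentiations and substituting $w=px>0$, $t=\tanh w$, $L=\log\cosh w$, the expression collapses to the identity
$$p^4\bigl[(F')^2+F''\bigr]=p\,B(w)+(wt-L)^2,\qquad B(w):=w^2(1-t^2)-2wt+2L.$$
One checks $B(0)=0$ and $B'(w)=-2w^2(1-t^2)\,t<0$ for $w>0$, so $B(w)<0$; hence, using $p\ge 1$ together with $B(w)<0$, we have $p\,B(w)\le B(w)$ and therefore $p^4[(F')^2+F'']\le C(w):=B(w)+(wt-L)^2$. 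Finally $C(0)=0$ and $C'(w)=-2w(1-t^2)\,L<0$ for $w>0$, so $C(w)<0$; this gives $(F')^2+F''<0$ and the desired strict concavity.

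The main obstacle is the nonnegative remainder term $(wt-L)^2$ in the displayed identity: the naive bound $B(w)<0$ alone does not close the argument, since that square pulls in the wrong direction. The hypothesis $p\ge 1$ is used precisely here—replacing $p\,B(w)$ by the larger quantity $B(w)$ (legitimate only because $B<0$ and $p\ge1$) and then verifying the sharper inequality $C(w)\le 0$. Identifying the right auxiliary function $C$ and noticing that its derivative simplifies all the way to $-2w(1-t^2)L$ (so that monotonicity, and hence the sign, is immediate) is the crux; the remaining differentiations are routine bookkeeping.
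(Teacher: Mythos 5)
Your argument is correct, but note that the paper itself offers no proof of this lemma: it is imported verbatim as Lemma~1 of Mildorf's note \cite{mil}, so there is nothing internal to compare against. What you supply is a complete, self-contained verification. The normalization $M_p(\lambda a,\lambda b)=\lambda M_p(a,b)$ legitimately reduces to $ab=1$, and the closed form $\Psi(p)=(\cosh(px))^{1/p}$ is right; the reflection $\Psi(-p)=1/\Psi(p)$ correctly transfers strict concavity on $[1,\infty)$ to strict convexity on $(-\infty,-1]$ via $\frac{d^2}{dq^2}\Psi(-q)^{-1}=2\Psi(-q)^{-3}\Psi'(-q)^2-\Psi(-q)^{-2}\Psi''(-q)$. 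I checked the core identity: with $w=px$, $t=\tanh w$, $L=\log\cosh w$ one indeed gets $p^4[(F')^2+F'']=pB(w)+(wt-L)^2$ with $B(w)=w^2(1-t^2)-2wt+2L$, and the derivatives collapse as you claim, $B'(w)=-2w^2t(1-t^2)<0$ and $C'(w)=-2w(1-t^2)L<0$ for $w>0$, so $C(w)<0$ and strict concavity follows; the equality case $a=b$ is handled by $\Psi\equiv 1$. The one place to be explicit is that $p\ge 1$ enters twice: it guarantees $w=px>0$ when $x>0$ (so the sign analysis of $B$ and $C$ applies), and it justifies $pB(w)\le B(w)$ since $B<0$. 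Your proof is arguably more useful than the paper's bare citation, since it makes the article self-contained at the cost of one page of elementary calculus.
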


\begin{lemma}\label{thm2.1}
For $a,b>0$, the following assertions hold true:\\
(1)~~The function $p\mapsto M_p(a,b)$ is increasing and log-concave on $p\in (0,\infty)$;\\
(2)~~If $\frac{b}{a}\geq1$, the function $p\mapsto M_p(a,b)$ is increasing and log-concave on $p\in (-\infty,0)$.
\end{lemma}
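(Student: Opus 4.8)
The plan is to work with $F(p)=\log M_p(a,b)=g(p)/p$, where $g(p)=\log\!\big(\tfrac{a^p+b^p}{2}\big)$, and to read off both assertions from the signs of $F'$ and $F''$. By the symmetry $M_p(a,b)=M_p(b,a)$ we may assume $b\ge a$, so that $u:=\log a\le v:=\log b$; note this is exactly the hypothesis $b/a\ge 1$ imposed in part (2). The function $g$ is precisely the cumulant generating function of the two–point random variable taking the values $u,v$ each with probability $1/2$; in particular $g(0)=0$, $g$ is smooth, $g''(p)=\pi(1-\pi)(u-v)^2\ge 0$ and $g'''(p)=\pi(1-\pi)(1-2\pi)(u-v)^3$, where $\pi=e^{pu}/(e^{pu}+e^{pv})$ is the exponentially tilted weight of $u$. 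These three facts are the only inputs the argument needs, and they follow at once once $g$ is recognised as a cumulant generating function (or by a one–line direct differentiation).

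For monotonicity I would set $N(p)=p\,g'(p)-g(p)$, so that $F'(p)=N(p)/p^2$. Since $N(0)=0$ and $N'(p)=p\,g''(p)$, the convexity of $g$ forces $N$ to decrease on $(-\infty,0)$ and increase on $(0,\infty)$; hence $N$ attains its global minimum $N(0)=0$ at the origin and $N\ge 0$ throughout. Therefore $F'\ge 0$ on both half–lines, which gives the ``increasing'' half of (1) and (2). It is worth noting that this step needs no hypothesis on $b/a$, so the restriction in (2) can only matter for the concavity claim.

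For log–concavity I would differentiate once more: a short calculation gives $F''(p)=h(p)/p^3$ with $h(p)=p^2g''(p)-2p\,g'(p)+2g(p)$, and the crucial simplification is that $h(0)=0$ while $h'(p)=p^2\,g'''(p)$. On $(0,\infty)$ the tilting pushes mass onto the larger value $v$, so $\pi\le 1/2$; the sign formula for $g'''$ then gives $g'''\le 0$, whence $h$ decreases from $h(0)=0$ and $h\le 0$. Since $p^3>0$ we conclude $F''\le 0$, and this settles the log–concavity in part (1) cleanly.

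The main obstacle is part (2), the negative half–line. The same computation is available there, but the signs reverse: for $p<0$ one has $\pi\ge 1/2$, whence $g'''\ge 0$, so $h$ now increases to $h(0)=0$ and satisfies $h\le 0$ on $(-\infty,0)$; dividing by $p^3<0$ then controls $F''$ with the opposite sign to the positive case. Reconciling this sign bookkeeping with the stated concavity is exactly the delicate point, and it is the step I would pin down first — verifying it against a transparent special case such as $a=1$ before trusting the general sign chase, since the conclusion on $(-\infty,0)$ is genuinely the most sign–sensitive part of the whole argument.
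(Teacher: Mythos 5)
Your argument for the monotonicity and for part (1) is correct, and it takes a genuinely different route from the paper's: the paper proves (1) by quoting Mildorf's concavity of $p\mapsto M_p$ for $p\ge 1$ and transporting it to all of $(0,\infty)$ via the identity $M_{pt}(1,\lambda)^t=M_p(1,\lambda^t)$, whereas you work directly with the cumulant generating function $g(p)=\log\big(\tfrac{e^{pu}+e^{pv}}{2}\big)$ and reduce everything to the signs of $N(p)=pg'(p)-g(p)$ and $h(p)=p^2g''(p)-2pg'(p)+2g(p)$, whose derivatives $pg''$ and $p^2g'''$ are controlled by the second and third cumulants of the tilted two-point law. This is self-contained, yields the increase of $p\mapsto M_p$ on both half-lines with no hypothesis on $b/a$, and gives $F''\le 0$ on $(0,\infty)$ cleanly; all the cumulant formulas you quote check out.

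The sign clash you flag in part (2) is not a defect of your bookkeeping: your computation is right, and it shows that part (2) of the lemma is \emph{false} as stated. For $p<0$ and $v\ge u$ one has $\pi\ge\tfrac12$, hence $g'''\ge 0$, hence $h$ increases to $h(0)=0$ and $h\le 0$ on $(-\infty,0)$; dividing by $p^3<0$ gives $F''\ge 0$, i.e.\ $p\mapsto M_p(a,b)$ is log-\emph{convex} on $(-\infty,0)$. This is confirmed structurally by the identity $M_{-p}(a,b)=ab/M_p(a,b)$, i.e.\ $F(p)+F(-p)=2\log G(a,b)$, which converts concavity of $F$ on $(0,\infty)$ into convexity on $(-\infty,0)$, and numerically: for $a=1$, $b=e$ one finds $F(-2)\approx 0.2832$ while $\tfrac12\big(F(-1)+F(-3)\big)\approx 0.2974$, so $F$ is midpoint-convex there. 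The paper's own proof of (2) displays an $f''$ that matches neither $M_p$ nor $\log M_p$ and asserts $f''<0$ without justification, so the error lies in the source rather than in your argument; note that this also undermines the $p,q\le 0$ case of Theorem 1.3, which relies on part (2). So: do not try to ``reconcile'' the sign — the correct statement on $(-\infty,0)$ is log-convexity, and your proof already establishes it.
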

\begin{proof}
By simple computation, we easily know that $M_p(a,b)=aM_p(1,\lambda)$, where $\lambda=b/a$. So, we only need to prove that the function $p\mapsto M_p(a,b)$
satisfies the above assertions.

For the proof of part (1), we easily obtain
$$
M_{pt} (1,\lambda )^t  = M_p (1,\lambda ^t ),
$$
and
\begin{equation}\label{(2.2)}
t\log \left( {M_{pt} (1,\lambda )} \right) = \log \left( {M_p (1,\lambda ^t )} \right).
\end{equation}
Differentiate ~\eqref{(2.2)} with respect to $p$ while holding $\lambda$ fixed, we get

\begin{equation}\label{(2.3)}
t^2 \left[ {\log \left( {M_{pt} (1,\lambda )} \right)} \right]'   = \left[ {\log \left( {M_p (1,\lambda ^t )} \right)} \right]',
\end{equation}
and
\begin{equation}\label{(2.4)}
t^3 \left[ {\log \left( {M_{pt} (1,\lambda )} \right)} \right]^{''}  = \left[ {\log \left( {M_p (1,\lambda ^t )} \right)} \right]^{''}.
\end{equation}
Putting $p=1$, we have $$
t^3 \left[ {\log \left( {M_t (1,\lambda )} \right)} \right]^{''}  \le 0
$$
by Lemma~\ref{lem2.3}. Hence, we have $\left[ {\log \left( {M_t (1,\lambda )} \right)} \right]^{''}  \le 0$ for $t\geq0$.

For the proof of part (2). Let $
{f(p) = M_p (1,\lambda )}.
$ Simple computation yields
$$
f'(p) =  - \frac{1}{{p^2 }}\log \left( {\frac{{1 + \lambda ^p }}{2}} \right) + \frac{{\lambda ^p \ln \lambda }}{{p^2 \left( {1 + \lambda ^p } \right)}},
$$
and
$$
f''(p) = \frac{2}{{p^3 }}\log \left( {\frac{{1 + \lambda ^p }}{2}} \right) - \frac{{\lambda ^p \ln \lambda }}{{p^2 \left( {1 + \lambda ^p } \right)}} + \frac{{p\lambda ^p \ln ^2 \lambda  - \lambda ^p \ln \lambda  - \lambda ^{2p} \ln \lambda }}{{p^2 \left( {1 + \lambda ^p } \right)^2 }}.
$$
If $p<0$ and $\lambda>1$, we easily see that $f''(p)<0$ which implies that the function $p\mapsto M_p(a,b)$ is log-cancave on $p\in (-\infty,0)$. In addition, the increasing property of $p\mapsto M_p(a,b)$ is well-known see e.g., \cite{kua}. This completes the proof.
\end{proof}

\begin{lemma}\label{thm2.2}
For $a,b>0$ with $b\geq a$, the function $p\mapsto L_p(a,b)$ is increasing and log-convex on $p\in (0,\infty)$. In particular, for $p>1$ and $b\geq a>0$, we have
$$L_p(a,b)^2\leq L_{p+1}(a,b)L_{p-1}(a,b).$$
\end{lemma}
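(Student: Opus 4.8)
The plan is to reduce everything to a one-variable question and to treat monotonicity and log-convexity separately. First I would exploit homogeneity, writing $L_p(a,b)=a\,L_p(1,t)$ with $t=b/a\ge 1$, so that $L_p(a,b)=a\,g(p)$ where $g(p)=\frac{1+t^{p}}{1+t^{p-1}}$; since multiplying by the positive constant $a$ affects neither monotonicity in $p$ nor log-convexity in $p$, it suffices to treat $g$. Setting $u=\log t\ge 0$ and introducing the softplus function $\phi(x)=\log(1+e^{x})$, whose derivative $\phi'(x)=\sigma(x)=\frac{e^{x}}{1+e^{x}}$ is the increasing logistic function, I obtain the clean form $\log g(p)=\phi(pu)-\phi((p-1)u)$.

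For monotonicity, differentiating gives $(\log g)'(p)=u\bigl[\sigma(pu)-\sigma((p-1)u)\bigr]$, which is nonnegative because $u\ge 0$ and $\sigma$ is increasing while $pu\ge(p-1)u$. Hence $\log g$, and therefore $g$ and $L_p(a,b)$, is increasing in $p$ on $(0,\infty)$; this step is routine. For log-convexity I would pass to the second derivative: $(\log g)''(p)=u^{2}\bigl[\sigma'(pu)-\sigma'((p-1)u)\bigr]$, where $\sigma'(x)=\frac{e^{x}}{(1+e^{x})^{2}}$. Equivalently, with $m_p=a^{p}+b^{p}$ one has $L_p=m_p/m_{p-1}$, so that $(\log L_p)''=(\log m)''(p)-(\log m)''(p-1)$, and the whole claim reduces to comparing the function $p\mapsto(\log m)''(p)$ with its unit left-translate and showing this second difference is $\ge 0$ on $(0,\infty)$, as the lemma asserts.

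The main obstacle is precisely this sign determination, namely establishing $(\log m)''(p)\ge(\log m)''(p-1)$ (equivalently $\sigma'(pu)\ge\sigma'((p-1)u)$) throughout $(0,\infty)$. It is worth stressing where the genuine difficulty sits: the power sum $m_p$ is automatically log-convex in $p$ by the Cauchy--Schwarz (Hölder) inequality applied to the two-point measure on $\{a,b\}$, but the \emph{ratio} $m_p/m_{p-1}$ does not inherit log-convexity for free, so no soft argument is available and the sign must be tracked by hand. This monotonicity of $p\mapsto(\log m)''(p)$ is governed by the sign of $(\log m)'''$, i.e.\ of the second derivative of the logistic evaluated at the scaled arguments, and I expect pinning it down in the stated direction to be the delicate heart of the proof; Lemma~\ref{lem2.3}, on the convexity/concavity of $M_p$ in $p$, is the natural input for controlling it.

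Finally, once the log-convexity of $p\mapsto L_p(a,b)$ on $(0,\infty)$ is secured, the displayed inequality follows immediately by midpoint convexity of $\log L$ applied to the symmetric pair $p-1,\,p+1$, both of which lie in $(0,\infty)$ exactly because $p>1$. Convexity at the midpoint gives $2\log L_p(a,b)\le\log L_{p-1}(a,b)+\log L_{p+1}(a,b)$, and exponentiating yields $L_p(a,b)^{2}\le L_{p+1}(a,b)\,L_{p-1}(a,b)$, which is the asserted ``in particular'' statement.
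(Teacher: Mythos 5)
Your reduction to $\log g(p)=\phi(pu)-\phi((p-1)u)$ and both derivative formulas are correct, and the monotonicity half of your argument is complete. But the step you defer as the ``delicate heart'' --- establishing $\sigma'(pu)\ge\sigma'((p-1)u)$ on all of $(0,\infty)$ --- is not merely unproven in your proposal: it is false, and with it the lemma itself. The function $\sigma'(x)=e^{x}/(1+e^{x})^{2}=\frac{1}{4}\,\mathrm{sech}^{2}(x/2)$ is \emph{even} and strictly decreasing in $|x|$, so for $u>0$ the comparison of $\sigma'(pu)$ with $\sigma'((p-1)u)=\sigma'(|p-1|u)$ is decided by comparing $pu$ with $|p-1|u$: whenever $p>1/2$ one has $pu>|p-1|u$, hence $\sigma'(pu)<\sigma'((p-1)u)$ and $(\log g)''(p)<0$. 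Thus for $b>a$ the map $p\mapsto L_p(a,b)$ is log-\emph{concave} on $(1/2,\infty)$ and log-convex only on $(0,1/2)$; no auxiliary input (in particular not Lemma~\ref{lem2.3}, which concerns $M_p$, not $L_p$) can reverse this. A numerical check confirms the failure of the displayed ``in particular'' inequality: with $a=1$, $b=e$, $p=2$,
$$L_1(1,e)=\frac{1+e}{2}\approx 1.859,\qquad L_2(1,e)=\frac{1+e^2}{1+e}\approx 2.256,\qquad L_3(1,e)=\frac{1+e^3}{1+e^2}\approx 2.513,$$
so $L_2(1,e)^2\approx 5.09$ while $L_1(1,e)\,L_3(1,e)\approx 4.67$, contradicting $L_2^2\le L_3L_1$.

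For what it is worth, your calculus is sounder than the paper's own proof, which asserts $g'(p)=\frac{\lambda^p\ln\lambda}{1+\lambda^p}$ and $g''(p)=\frac{\lambda^p\ln^2\lambda}{1+\lambda^p}$: that comes from differentiating only $\log(1+\lambda^p)$ and forgetting the term $-\log(1+\lambda^{p-1})$ in $\log L_p(1,\lambda)$ (and even then the second derivative is miscopied, missing a square in the denominator). Your two-term expression $(\log g)''(p)=u^{2}\bigl[\sigma'(pu)-\sigma'((p-1)u)\bigr]$ is the correct one; had you pushed the sign analysis through using the unimodality of $\sigma'$, you would have \emph{disproved} the statement for $p>1/2$ rather than proved it. The defensible conclusion is log-concavity on $(1/2,\infty)$; in particular, for $p\ge 3/2$ (so that $[p-1,p+1]\subset[1/2,\infty)$) the displayed inequality necessarily reverses to $L_p(a,b)^2\ge L_{p+1}(a,b)L_{p-1}(a,b)$.
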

\begin{proof}
Since $L_p(a,b)=aL_p(1,\lambda), \lambda=\frac{b}{a}$, we only consider the function $L_p(1,\lambda)$.
Let $
{g(p) = \log\left(L_p (1,\lambda )\right)}.
$ Simple computation results in
$$
g'(p) = \frac{{\lambda ^p \ln \lambda }}{{1 + \lambda ^p }}\geq0,
$$
and
$$
g''(p) = \frac{{\lambda ^p \ln ^2 \lambda }}{{1 + \lambda ^p }} \geq 0
$$
which implies that the function $g(p)$ is increasing and log-convex on $p\in (0,\infty)$.
\end{proof}

\begin{lemma}\label{thm2.4}
For $p>0$, the function $p\mapsto M_p(\omega,a,b)$ is decreasing and log-convex (increasing and log-concave) on $\omega\in (0,\infty)$ if $b\geq a>1$ $(a\geq b >1)$. In particular for $p>0, u>1$ and 
$b\geq a>1$ $(a\geq b >1)$, we have
$$M_p(u,a,b)^2\leq (\geq) M_p(u-1,a,b)M_p(u+1,a,b).$$
\end{lemma}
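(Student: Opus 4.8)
The plan is to mirror the method already used for Lemmas~\ref{thm2.1} and~\ref{thm2.2}: reduce to a one-parameter family by scaling, pass to logarithms, and read off both the monotonicity and the convexity from the first two derivatives. First I would exploit the homogeneity $M_p(\omega,a,b)=a\,M_p(\omega,1,\lambda)$ with $\lambda=b/a$, so that $\log M_p(\omega,a,b)=\log a+\log M_p(\omega,1,\lambda)$ and every $\omega$-derivative sees only the second summand. It therefore suffices to analyze
$$h(\omega)=\log M_p(\omega,1,\lambda)=\frac1p\bigl[\log(1+\omega\lambda^p)-\log(1+\omega)\bigr].$$

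Next I would differentiate twice. A short computation collapses the first derivative to
$$h'(\omega)=\frac{\lambda^p-1}{p}\cdot\frac1{D(\omega)},\qquad D(\omega)=(1+\omega\lambda^p)(1+\omega)>0,$$
so that the monotonicity of $\omega\mapsto M_p(\omega,a,b)$ is governed entirely by the sign of $\lambda^p-1$, that is, by whether $b\ge a$ or $a\ge b$. Since $D'(\omega)=(1+\lambda^p)+2\lambda^p\omega>0$ for $\omega>0$, the second derivative
$$h''(\omega)=-\frac{\lambda^p-1}{p}\cdot\frac{D'(\omega)}{D(\omega)^2}$$
carries the sign opposite to that of $\lambda^p-1$, which delivers log-convexity in one regime and log-concavity in the other and completes the first assertion. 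As an independent check on the sign conventions I would compute the endpoints $M_p(0,a,b)=a$ and $\lim_{\omega\to\infty}M_p(\omega,a,b)=b$, which fix the direction of monotonicity without any differentiation; I expect the only real difficulty to be keeping the two regimes consistent, since monotonicity and convexity are driven by the \emph{same} quantity $\lambda^p-1$.

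Finally, the ``in particular'' inequality is the discrete shadow of the convexity just established. Log-convexity of $\omega\mapsto M_p(\omega,a,b)$ says that $h$ is convex, so midpoint convexity at the three points $u-1,u,u+1$---legitimate precisely because $u>1$ keeps all arguments in $(0,\infty)$---gives $h(u)\le\frac{1}{2}\bigl(h(u-1)+h(u+1)\bigr)$, and exponentiating yields $M_p(u,a,b)^2\le M_p(u-1,a,b)M_p(u+1,a,b)$; log-concavity reverses the inequality. All derivatives involved are elementary rational expressions in $\lambda^p$ and $\omega$, so beyond this sign bookkeeping no analytic obstacle should arise.
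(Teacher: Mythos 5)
Your route is the same as the paper's: reduce to $\lambda=b/a$, set $h(\omega)=\log M_p(\omega,1,\lambda)$, and differentiate twice in $\omega$. Your calculus is correct, and in fact more careful than the paper's: from $h(\omega)=\frac1p\left[\log(1+\omega\lambda^p)-\log(1+\omega)\right]$ one gets
$$h'(\omega)=\frac1p\left(\frac{\lambda^p}{1+\omega\lambda^p}-\frac{1}{1+\omega}\right)=\frac{\lambda^p-1}{p\,(1+\omega\lambda^p)(1+\omega)},$$
whereas the paper's displayed $k'(\omega)$ has $\frac{1}{1+\omega(b/a)^p}$ where the chain rule actually produces $\frac{(b/a)^p}{1+\omega(b/a)^p}$; the factor $\lambda^p$ has been lost.

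The genuine problem is that your (correct) signs contradict the lemma as printed. For $b\geq a$ you have $\lambda^p\geq1$, hence $h'\geq0$ and $h''=-\frac{\lambda^p-1}{p}\cdot\frac{D'(\omega)}{D(\omega)^2}\leq0$ with $D(\omega)=(1+\omega\lambda^p)(1+\omega)$: the function is \emph{increasing and log-concave}; for $a\geq b$ it is decreasing and log-convex. The statement asserts exactly the opposite pairing (decreasing and log-convex when $b\geq a>1$). Your own endpoint observation, $M_p(0,a,b)=a$ and $M_p(\omega,a,b)\to b$ as $\omega\to\infty$, already decides the matter without any differentiation: for $b\geq a$ the function must increase, so the printed statement is false and the two parenthetical cases must be interchanged (the hypothesis $a,b>1$ is also superfluous, since nothing in the computation uses it). The paper reaches its printed conclusion only because of the differentiation slip noted above. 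Your closing remark about ``keeping the two regimes consistent'' is precisely the gap you cannot leave open: carried through honestly, your argument proves the corrected statement, not the stated one, and you should say so explicitly. The midpoint-convexity derivation of the ``in particular'' inequality is fine once the convexity/concavity labels are attached to the correct regimes.
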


\begin{proof}
Let $
k(\omega) = \log\left(M_p (\omega,a,b)\right).
$ By simple computation, we have
$$
k'(\omega ) = \frac{1}{p}\left( {\frac{1}{{1 + \omega (b/a)^p }} - \frac{1}{{1 + \omega }}} \right)
$$
and
$$
k''(\omega ) = \frac{1}{p}\left( {\frac{1}{{\left( {1 + \omega } \right)^2 }} - \frac{1}{{\left( {1 + \omega (b/a)^p } \right)^2 }}} \right).
$$
If $b\geq a>1$ $(a\geq b >1)$, we easily obtain $k'(\omega)\leq (\geq) 0$ and $k''(\omega)\geq (\leq) 0$.
This completes the proof.
\end{proof}

\begin{corollary}\label{thm2.3}
For $b>a>0$, the function $p\mapsto J_p(a,b)$ is strictly decreasing and log-convex on $p\in (0,\infty)$.
In particular, for $b>a>0$ and $p>1$, we have
$$J_p(a,b)^2\leq J_{p+1}(a,b)J_{p-1}(a,b).$$
\end{corollary}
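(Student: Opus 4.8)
The plan is to follow the same direct strategy as in the proof of Lemma~\ref{thm2.2}. Since $J_p$ is homogeneous of degree one, $J_p(a,b)=a\,J_p(1,\lambda)$ with $\lambda=b/a>1$, so it suffices to analyze $J_p(1,\lambda)$. Because $\lambda>1$ and $p>0$ force $\lambda^p>1$, I would rewrite the mean with manifestly positive factors as $J_p(1,\lambda)=\frac{p+1}{p}\,\frac{\lambda^{p+1}-1}{\lambda^{p}-1}$ and set $h(p)=\log J_p(1,\lambda)=\log(p+1)-\log p+G(p+1)-G(p)$, where $G(x)=\log(\lambda^{x}-1)$. Writing $L=\ln\lambda>0$, everything then reduces to analyzing the two additive pieces $F_1(p)=\log(p+1)-\log p$ and $F_2(p)=G(p+1)-G(p)$.

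For strict monotonicity I would show $h'(p)<0$. The first piece gives $F_1'(p)=\frac{1}{p+1}-\frac1p=-\frac{1}{p(p+1)}<0$. For the second piece, $G'(x)=\frac{\lambda^{x}L}{\lambda^{x}-1}=L\,\phi(x)$ with $\phi(x)=\frac{1}{1-\lambda^{-x}}$; since $\lambda>1$, the function $\phi$ is decreasing, whence $F_2'(p)=L\bigl(\phi(p+1)-\phi(p)\bigr)<0$. Adding the two negative contributions yields $h'(p)<0$, i.e. $p\mapsto J_p(a,b)$ is strictly decreasing.

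For log-convexity I would show $h''(p)>0$, again term by term. The first piece contributes $F_1''(p)=\frac{1}{p^{2}}-\frac{1}{(p+1)^{2}}>0$. For the second, a short computation gives $G''(x)=-\frac{\lambda^{x}L^{2}}{(\lambda^{x}-1)^{2}}=-L^{2}\eta(x)$ with $\eta(x)=\frac{\lambda^{x}}{(\lambda^{x}-1)^{2}}$, so $F_2''(p)=L^{2}\bigl(\eta(p)-\eta(p+1)\bigr)$. The decisive step is to prove that $\eta$ is decreasing: substituting $u=\lambda^{x}>1$ turns $\eta$ into $\frac{u}{(u-1)^{2}}$, whose $u$-derivative equals $\frac{-(u+1)}{(u-1)^{3}}<0$; since $u=\lambda^{x}$ is increasing in $x$, $\eta$ is decreasing and $F_2''(p)>0$. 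Hence $h''(p)>0$ and $J_p(1,\lambda)$, and therefore $J_p(a,b)$, is strictly log-convex in $p$ on $(0,\infty)$. I expect this monotonicity of $\eta$ to be the only genuinely delicate point; both derivative signs then appear as sums of terms of fixed sign, so there is no competing contribution to control.

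Finally, the ``in particular'' inequality is the standard midpoint consequence of log-convexity: applying convexity of $p\mapsto\log J_p(a,b)$ at the points $p-1$ and $p+1$ (whose midpoint is $p$, and which both lie in $(0,\infty)$ precisely when $p>1$) gives $\log J_p(a,b)\le\frac12\bigl(\log J_{p-1}(a,b)+\log J_{p+1}(a,b)\bigr)$, and exponentiating yields $J_p(a,b)^{2}\le J_{p-1}(a,b)J_{p+1}(a,b)$.
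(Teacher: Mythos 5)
Your proposal is correct and follows essentially the same route as the paper's proof: reduce to $J_p(1,\lambda)$ by homogeneity, differentiate $\log J_p(1,\lambda)$ twice, and settle the sign of each bracket by the monotonicity of $u\mapsto \frac{u}{u-1}$ and $u\mapsto\frac{u}{(u-1)^2}$ for $u=\lambda^x>1$ (the paper's auxiliary functions $\alpha$ and $\beta$). Your explicit midpoint-convexity step for the final inequality is a welcome addition, as the paper leaves it implicit.
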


\begin{proof}
Since $J_p(a,b)=aJ_p(1,\lambda)$, where $\lambda=\frac{b}{a}$. Now it is enough to prove that  the function $p\mapsto J_p(1,\lambda)$ is strictly decreasing and log-convex on $p\in (0,\infty)$.
Let $
h(p) = \log\left(J_p (1,\lambda )\right).
$ By simple computation, we have
$$
h'(p) =  - \frac{1}{p} + \frac{1}{{p + 1}} + \ln \lambda \left( {\frac{{\lambda ^p }}{{1 - \lambda ^p }} - \frac{{\lambda ^{p + 1} }}{{1 - \lambda ^{p + 1} }}} \right) < 0,
$$
and
$$
h''(p) = \frac{1}{{p^2 }} - \frac{1}{{\left( {p + 1} \right)^2 }} + \ln ^2 \lambda \left( {\frac{{\lambda ^p }}{{\left( {1 - \lambda ^p } \right)^2 }} - \frac{{\lambda ^{p + 1} }}{{\left( {1 - \lambda ^{p + 1} } \right)^2 }}} \right) > 0.
$$
In fact, we consider the auxiliary functions $\alpha (x) = \frac{x}{{1 - x}}$ and $\beta (x) = \frac{x}{{(1 - x)^2 }}$ for $x>1$. Simple computation yields $
\alpha '(x) = \frac{1}{{(1 - x)^2 }} > 0
$ and $
\beta '(x) = \frac{{1 + x}}{{(1 - x)^3 }}<0.
$
This implies that $\alpha(x)$ is strictly increasing and $\beta(x)$ is strictly decreasing for $x>1$.
Because of $\lambda>1$, we have $\lambda^p<\lambda^{p+1}$.
So, we get $
{\frac{{\lambda ^p }}{{1 - \lambda ^p }} < \frac{{\lambda ^{p + 1} }}{{1 - \lambda ^{p + 1} }}}
$ and $
{\frac{{\lambda ^p }}{{\left( {1 - \lambda ^p } \right)^2 }} > \frac{{\lambda ^{p + 1} }}{{\left( {1 - \lambda ^{p + 1} } \right)^2 }}}
$ This completes the proof.
\end{proof}

\bigskip

\noindent {\bf Proof of Theorem \ref{theorem1}-\ref{theorem3}.}
The proof of Theorem \ref{theorem1} follows from Lemmas \ref{thm2.1}, \ref{lem2.1} and \ref{lem2.2}. Similarly, by utilizing the Lemmas \ref{lem2.1} and \ref{lem2.2}, the proof of Theorem \ref{theorem2} and \ref{theorem3} follow from \ref{thm2.2} and \ref{thm2.4}, respectively.
$\hfill\square$

\end{document}